\newcommand{\suppress}[1]{}
\newtheorem{theorem}{Theorem}[section]
\newtheorem{lemma}[theorem]{Lemma}
\theoremstyle{definition}
\newtheorem{definition}{Definition}[section]
\newcommand{\norm}[2][]{\ensuremath{\left\Vert #2 \right\Vert_{#1}}}
\newcommand\xx{\boldsymbol{\mathit{x}}}
\newcommand\rr{\boldsymbol{\mathit{r}}}
\newcommand\yy{\boldsymbol{\mathit{y}}}
\newcommand\PP{\boldsymbol{\mathit{P}}}
\newcommand\ww{\boldsymbol{\mathit{w}}}
\newcommand\zz{\boldsymbol{\mathit{z}}}
\newcommand\sss{\boldsymbol{\mathit{s}}}
\begin{document}

\title{Rock-Paper-Scissors, Differential Games and \\
Biological Diversity}

\author{Tung Mai\\Georgia Tech
\and Ioannis Panageas\\MIT 
\and Will Ratcliff\\Georgia Tech
\and Vijay V. Vazirani\footnote{On leave from Georgia Tech.}\\UC Irvine
\and Peter Yunker\\Georgia Tech}

\date{}
\maketitle

\abstract{
We model a situation in which a collection of species derive their fitnesses via a rock-paper-scissors-type game; however, the precise payoffs are
a function of the environment. The new aspect of our model lies in adding a feedback loop: the environment changes according to the relative fitnesses
of the species; in particular, it gives a boost to the species having small populations. We cast our model in the setting of a differential game and 
we show that for a certain setting of parameters, this dynamics cycles. Our model is a natural one,
since depletion of resources used by more frequent species will shift the payoff matrix towards favoring less frequent ones. 
Since the dynamics cycles, no species goes extinct and diversity is maintained.

\section{Introduction}

Game theory has yielded deep insights into biological phenomena for almost a century. For example, the work of Fisher, Haldane, and Wright gave the central model of replicator dynamics which has been used extensively to
study evolutionary processes. Over the last few years, algorithmic game theory has contributed substantial ideas to this field.
In particular, Chastain et. al. \cite{PNAS2:Chastain16062014} showed that replicator dynamics can be viewed as the highly versatile algorithm of multiplicative weights update (MWU) and a sequence of papers used this insight to study the process of evolution \cite{PNAS2:Chastain16062014,evolfocs14,ITCS15MPP,MPPTV15,cacm}.
%
%

In most biological phenomena, the fitness of strategies is frequency-dependent and fixed payoff matrices are insufficient to describe them. One way of
understanding some of these phenomena is via dynamic games, in which the payoff matrix can change with time. 
Two main categories of such games are stochastic games \cite{Shapley53} which are discrete, with the payoff matrix being governed by a Markov chain, and differential games \cite{rufus}, the state space of which is described via a differential equation (continuous time dynamical system). An example of the 
first is \cite{MPPTV15}, which studies evolution via the MWU algorithm, and an example of the second is the recent elegant model of Weitz et al. \cite{weitz}, 
studying a dynamical version of the tragedy of the commons.

The work presented here is inspired by the last paper, \cite{weitz}. A tragedy of the commons occurs when a large number of agents simultaneously play a 
Prisoner's Dilemma-type game.
Selfish behavior on the part of agents, i.e., defection, leads to a highly sub-optimal outcome as compared to the globally best outcome
in which all agents cooperate. An early, and motivating example, of this
phenomena was cattle grazing in a common pasture. Each shepherd's selfish strategy of letting his cattle overgraze leads to
a depleted pasture land. The novel idea in \cite{weitz} was
to model a situation in which the actions of the agents slowly change the environment, and hence the payoff matrix, so that the selfish strategy 
of agents itself changes.
They give differential equations modeling this situation and show regimes of parameters under which the game cycles between the 
two extremes of replete and depleted environments, with selfish behavior cycling between cooperation and defection.
%
%

The underlying game analyzed by Weitz et. al. is particularly simple in that each agent has only two strategies.
In this paper, we consider a more complex class of games which are non-transitive, namely rock-paper-scissors, which has three strategies, and its generalization to
$n$ strategies. The rock-paper-scissors (RPS) game belongs to a general class of negative feedbacks in biology caused by non-transitivity. 
In such scenarios, there is no global optimum, as each strategy beats one of the other two, and is beaten by a third. 
Non-transitive dynamics appear to be widespread in biology, and rock-paper-scissors games have been reported in 
diverse organisms, including plants \cite{bio1, bio2}, animals \cite{bio3}, and microorganisms \cite{bio4,bio5,bio6}. In general, rock paper 
scissors dynamics maintain biological diversity (i.e., individuals, genes, or species), as no single strategy 
is capable of outright dominance \cite{bio1, bio4, bio7}. Most prior models of RPS dynamics used in biology 
%
%
assumed fixed payoff matrices. This is an incongruity with biology, where the fitness of most strategies will be frequency-dependent. 
On the other hand, analyzing complex games, such as RPS, in the setting of differential games is not straightforward and there is a need to develop
techniques for doing this.

%
%

Our model is as follows: We assume that the fitness of a collection of species is given by a rock-paper-scissors-type game. Additionally, the precise payoff
matrix is a function of the environment. The new aspect of our model lies in adding a feedback loop: the environment changes according to the relative fitnesses
of the species; in particular, it gives a boost to the species having smaller populations. We cast our model in the setting of a differential game and 
we show that for a certain setting of parameters, this dynamics cycles. Our model is a natural one,
since depletion of resources used by more frequent species will shift the payoff matrix towards favoring less frequent ones. 
Since the dynamics cycles, no species goes extinct and diversity is maintained.

More precisely, assume a population of $n$ species and let $\xx$ give the relative population of each species at any point in time, i.e., $x_i$ gives
the fraction of population of species $i$. We define $\PP_1, \ldots , \PP_n$ to be $n$ RPS-type
payoff matrices, where $\PP_i$ favors species $i$ over other species. At any point in time, the payoff matrix $\PP$ is a convex combination of these $n$
matrices given by weights $\ww$. We now define two replicator dynamics:
\begin{enumerate}
	\item The population $\xx$ follows a replicator dynamics based on payoff matrix $\PP$.
	\item The weights $\ww$ change according to a second replicator dynamics, based on $\xx$, in such a way that for smaller population species $i$, the 
    weight of $\PP_i$ tends to increase, hence giving species $i$ a boost.
\end{enumerate}

\smallskip
\noindent
\textbf{Our techniques:} We analyze a modification of the model of Weitz et al. \cite{weitz}. In our setting, the payoff matrix corresponds to an RPS game for $n\geq 3$ species (strategies). We show that the dynamics as defined in (\ref{eq:dynamics}) ``cycle" in the following sense: For all but a measure zero of initial population vectors $\xx_0 >0$ and weight vectors $\ww_0>0$ (strictly positive coordinates) and for any $\epsilon >0$, the trajectory of (\ref{eq:dynamics}) will return to a distance at most $\epsilon$ from $(\xx_0,\ww_0)$ infinitely often (for an illustration of the theorem, see Figures \ref{fig:x},\ref{fig:w}). The proof of the main theorem relies on the Poincar\'e recurrence theorem \ref{thm:poinca}, a well known theorem established a century ago. In words, the theorem says that systems that satisfy the two conditions of conservation of volume and that no orbit goes to the boundary will, after finite time, return to a state very close to the initial state and this will happen an infinite number of times. We proved that  under a certain homeomorphic transformation $\Pi$ (see Definition \ref{def:transform}), our dynamics satisfies both conditions of Poincar\'e's theorem. 

We established the second condition (Lemma \ref{lem:bounded}) by coming up with a log-barrier function (a similar idea is used in constrained optimization) which is finite in the interior of the state space and infinite at the boundary and by proving that this function is constant with respect to time. This suffices to prove that the orbits do not reach the boundary of $\Delta_n \times \Delta_n$ (and hence the orbits for the transformed system under $\Pi$ are bounded). The former condition can be proved via Liouville's theorem \ref{thm:liouville} (see also \cite{Sandholm10}). Finally, it can be shown that $\Pi^{-1}$ exists and is continuous, hence the result for the transformed dynamics carries over to the original dynamics (\ref{eq:dynamics}). Our results hold for any dimensions in contrast to \cite{weitz}; their results hold for 2 dimensions ($2\times 2$ payoff matrices) only.

We believe that these kinds of techniques will be useful for future models of non-transitive dynamics. We also believe that such models will
benefit from a study within algorithmic game theory.

\suppress{

\section{Related Work}
Evolutionary dynamical systems are central to the sciences due to their versatility in modeling a wide variety of biological, social and cultural phenomena \cite{NowakBook}.
Such dynamics are often used to capture the deterministic, {\em infinite population} setting, and are typically the first step in our understanding of seemingly complex evolutionary processes. The mathematical introduction of (infinite population) evolutionary processes is dating back to the work of Fisher, Haldane, and Wright in the beginning of the twentieth century. One example is the replicator equations (see Section \ref{sec:replicator}).

Theoretical computer science community is particularly interested in evolutionary dynamics. Valiant \cite{DBLP:journals/jacm/Valiant09} started viewing evolution through the lens of
computation and after that we have witnessed an accumulation of papers and problem proposals \cite{evolfocs14, DSV12, PNAS2:Chastain16062014}. 
We suggest the reader to read this nice expository \cite{cacm}. Later on papers appear where techniques from dynamical systems are used \cite{ITCS15MPP, PSV15, PV16, PapV16, pappil16}.

\medskip
A paragraph to add here!!
Change of environment \cite{wolf2005diversity}, cite a couple of papers on stochastic and differential games (maybe sequential too).

}

\subsection{Model}
\subsubsection{Classic RPS}
RPS is a game between two players, each of whom has three strategies. The payoff matrix can be written as
\[
\PP = \begin{bmatrix}
0 & -1 & 1 \\
1 & 0 & -1 \\
-1 & 1 & 0 \\
\end{bmatrix}.
\]
The RPS game can be generalized to more than three strategies, with the $n \times n$ payoff matrix being:
\[
\PP = \begin{bmatrix}
0 & -1 & 0 & 0  & \dots & 0 & 0 & 1 \\
1 & 0 & -1 & 0  & \dots & 0 & 0 & 0 \\
\vdots & \vdots & \vdots & \vdots &\vdots &\vdots & \vdots & \vdots\\
0 & 0 & 0 & 0 &\dots & 1 & 0 & -1\\
-1 & 0 & 0 & 0 & 0 & \dots & 1 & 0\\
\end{bmatrix}.
\]
In the context of biology, we will have $n$ competing species corresponding to the $n$ strategies, 
Let $\xx$ be the population vector where $x_i$ is the fraction of the population that is species $i$  
and $\rr$ be the fitness vector. Note that $\rr$ can be computed given $\xx$, i.e.,
$ \rr = \PP \xx.$ The replicator dynamics under $\PP$ is
\[x_i' = x_i(r_i - \overline{r}) \quad \forall i,\]
where $ \overline{r} = \xx^{\top}\PP\xx$ is the average fitness (if $\PP$ is antisymmetric, $\overline{r}$ is zero).

We will call the above the \textit{static} setting, since $\PP$ is fixed. Replicator dynamics in zero sum games have already been analyzed, proving that it has limit cycles or is recurrent (e.g., see \cite{Akin84, Sandholm10, sodapiliouras, piliourasaamas}).

\subsubsection{Dynamic payoff matrix}
We next define a \textit{dynamic} setting, where at all times the payoff matrix is a convex combination of $n$ matrices:
\[ \PP^{\ww} = w_1 \PP_1 + w_2 \PP_2 + \dots + w_n \PP_n, \]
where
\[
\PP_1 = \PP + \begin{bmatrix}
0 & \mu & \dots & \mu & \mu \\
-\mu & 0 & \dots & 0 & 0 \\
\vdots & \vdots & \vdots & \vdots & \vdots\\
-\mu & 0 & \dots & 0 & 0 \\
\end{bmatrix},
\PP_2 = \PP + \begin{bmatrix}
0 & -\mu & 0 & \dots & 0 \\
\mu & 0 & \mu & \dots & \mu \\
0 & -\mu & 0 & \dots & 0 \\
\vdots & \vdots & \vdots & \vdots & \vdots\\
0 & -\mu & 0 & \dots & 0 \\
\end{bmatrix},
\dots ,\]
\[
\PP_n = \PP + \begin{bmatrix}
0 & 0 & \dots & 0 & -\mu \\
0 & 0 & \dots & 0 &-\mu \\
\vdots & \vdots & \vdots & \vdots & \vdots\\
0 & 0 & \dots & 0 &-\mu \\
\mu & \mu & \dots & \mu & 0 \\
\end{bmatrix}.
\]
Formally, $\PP_i-\PP$ is a matrix with entries $\mu$ at the $i$-th row and $-\mu$ at the $i$-th column (rest of the entries and diagonal entry $(i,i)$ are zero). It can be seen that for $\mu>0$ (for the rest of the paper $\mu \geq 0$), $\PP_i$ favors type $i$ by increasing the payoff of $i$ when competing with other types. The weight vector $\ww = (w_1, \dots, w_n)$ changes according to the replicator dynamics under the following matrix
\[
\AA = \begin{bmatrix}
0 & x_2 - x_1 & x_3 - x_1 & \dots & x_{n-1}-x_1 & x_n-x_1 \\
x_1 - x_2 & 0 & x_3 - x_2 & \dots & x_{n-1}-x_2 & x_n-x_2 \\
\vdots & \vdots & \vdots & \vdots & \vdots & \vdots \\
x_1 - x_n & x_2 - x_n & x_3 - x_n & \dots & x_{n-1}- x_n & 0 \\
\end{bmatrix}.
\]
In words, $\PP^{\ww}$ changes with time. The idea behind the way $\ww$ changes, is that species with small population should be favored. Since $\AA$ is also anti-symmetric and has 0 entries in the diagonal, the replicator dynamics update rule of $\ww$ is
\[ w_i' = w_i\sum_{j} w_j (x_j - x_i)  \quad \forall i.\]

By model's definition,
\[
\PP^{\ww} = \begin{bmatrix}
0 & -1+ \mu(w_1-w_2) &  \mu(w_1 - w_3) & \dots & 1+\mu(w_1 - w_n) \\
1+\mu(w_2-w_1) & 0 & -1+\mu(w_2-w_3) & \dots & \mu(w_2 - w_n) \\
\vdots & \vdots & \vdots & \vdots & \vdots\\
\mu(w_{n-1} - w_1) & \mu(w_{n-1} - w_2) & \mu(w_{n-1} - w_3) & \dots  & -1+\mu(w_{n-1} - w_n)\\
-1+\mu(w_{n} - w_1) & \mu(w_{n} - w_2) & \mu(w_{n} - w_3) &\dots & 0 \\
\end{bmatrix}.
\]
Note that $\PP$ is anti-symmetric and has 0 entries in the diagonal. The replicator dynamics of $\xx$ becomes
\[ x_i' = x_i \cdot s_i  \quad \forall i,\]
where
$ \sss = \PP^{\ww} \xx$ is the fitness vector of population $\xx$ under $\PP^{\ww}$. Summing up, the system of ordinary differential equations that we would like to analyze is captured by ($w_i$ depends on $x_i$)

\begin{equation}\label{eq:dynamics}
x_i' = x_i \sum_j \PP^{\ww}_{ij} x_j, \; w_i' = w_i \sum_{j} w_j (x_j - x_i)  \quad \forall i.
\end{equation}

\smallskip
\noindent
\textbf{Notation:} We denote the probability simplex on a set of size $n$ as $\Delta_n$. Vectors in $\mathbb{R}^n$ are denoted in bold-face letters $\xx$ and are considered as column vectors. The $i$-th coordinate is denoted by $x_i$. To denote a row vector we use
$\xx^{\top}$. The time derivative of a function $y = y(t)$ is denoted by $y'$.
\section{Preliminaries}
\subsection{Dynamical Systems}
Let $f:\mathcal{S} \to \mathcal{S}$ be continuously differentiable with $\mathcal{S} \subset \mathbb{R}^n$, $\mathcal{S}$ an open set. A \emph{continuous (time) dynamical system} is of the form
\begin{equation}\label{eq:continuous}
\yy' \equiv \frac{d \yy}{dt} = f(\yy).
\end{equation}
Since $f$ is continuously differentiable, the system of ordinary differential equations (ode (\ref{eq:continuous})) along with the initial condition $\yy(0) = \yy_0 \in \mathcal{S}$ has a unique solution for $t \in \mathcal{I}(\yy_0)$ (some time interval) and we can present it by $\phi(t,\yy_0)$, called the \emph{flow} of the system. $  \phi_{t}(\yy_0) \equiv \phi(t,\yy_0)$ corresponds to a function of time which captures the \emph{trajectory/orbit} of the system with $\yy_0$ the given starting point. It is continuously differentiable, its inverse exists (denoted by $\phi_{-t}(\yy_0)$) and is also continuously differentiable (called \emph{diffeomorphism}) in the so called maximal interval of existence $\mathcal{I}$. It is also true that $\phi_{t}\circ \phi_{s} =\phi_{t+s}$ for $t,s,t+s \in \mathcal{I}$. $\yy_0 \in \mathcal{S}$ is called an \emph{equilibrium} if $f(\yy_0)=\textbf{0}$. In that case holds $\phi_t(\yy_0) = \yy_0$ for all $t\in \mathcal{I}$, i.e., $\yy_0$ is a \emph{fixed point} of the function $\phi_{t}(\yy)$ for all $t\in \mathcal{I}$.

 If $f$ is globally Lipschitz then the flow is defined for all $t \in \mathbb{R}$, i.e., $\mathcal{I} = \mathbb{R}$. One way to enforce the dynamical system to have a well-defined flow for all $t \in \mathbb{R}$ is to renormalize the vector field by $\norm[]{f(\yy)}+1$, i.e., the resulting dynamical system will be $\frac{d \yy}{dt} = \frac{f(\yy)}{\norm[]{f(\yy)}+1}$, because the function becomes globally $1$-Lipschitz.
The two dynamical systems (before and after renormalization) are topologically equivalent (\cite{perko}, p. 184). Formally this means that there exists a homeomorphism $H$ which maps trajectories of (\ref{eq:continuous}) onto trajectories of the renormalized flow and preserves the direction of time. In words it means that the two systems have the same behavior/geometry (same fixed points, convergence properties, phase portrait). For the rest of the paper, we may assume that the flow of dynamics (\ref{eq:dynamics}) is well-defined for all $t \in \mathbb{R}$.

\begin{definition}[\textbf{Volume preserving}]
The differential equation (\ref{eq:continuous}) is said to be volume preserving on $\mathcal{S}$ if for any measurable set $B \subseteq \mathcal{S}$, we have $\nu (\phi_t(B)) = \nu(B)$ for all $t \in \mathbb{R}$, where $\nu$ is the Lebesgue measure and $\phi_t$ the flow of the ode.
\end{definition}
The most common way to prove that the flow is volume preserving is via the Liouville's theorem.
\begin{theorem}[\textbf{Liouville theorem} \cite{Sandholm10}]\label{thm:liouville}
Let $\yy' = f(\yy)$ be an ode with flow $\phi_t$. It holds that
\[
\frac{d \nu (\phi_t(B))}{dt} = \int_{\phi_t(B)} (\nabla \cdot f) d\nu, \textrm{ for each Lebesgue measurable set }B.
\]
Therefore, as long as $\nabla \cdot f=0$, the flow preserves volume.
\end{theorem}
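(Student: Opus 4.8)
The plan is to reduce the statement to the classical Jacobi formula for the derivative of a determinant, applied to the Jacobian of the flow map $\yy \mapsto \phi_t(\yy)$. I would first observe that it suffices to treat a bounded measurable set $B$ (a general $B$ is then recovered by exhausting it with bounded sets and invoking monotone convergence). Since $f$ is $C^1$, each $\phi_t$ is a $C^1$ diffeomorphism on its interval of existence, so the change of variables formula gives
\[
\nu(\phi_t(B)) = \int_{\phi_t(B)} 1 \, d\nu = \int_B \left| \det D_{\yy}\phi_t(\yy) \right| d\nu(\yy),
\]
where $D_{\yy}\phi_t(\yy)$ is the Jacobian matrix of $\phi_t$ at $\yy$. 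The problem is thereby reduced to differentiating the integrand in $t$.

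Next I would write $J(t,\yy) = D_{\yy}\phi_t(\yy)$ and derive the \emph{variational equation}: differentiating the defining identity $\partial_t \phi_t(\yy) = f(\phi_t(\yy))$ with respect to $\yy$ and exchanging the order of the mixed partials (legitimate because $f$ is $C^1$ and the flow depends $C^1$-smoothly on initial conditions) gives $\partial_t J(t,\yy) = Df(\phi_t(\yy))\, J(t,\yy)$ with $J(0,\yy) = I$. Jacobi's formula then yields $\partial_t \det J = \det J \cdot \mathrm{tr}\!\left( (\partial_t J)\, J^{-1} \right) = \det J \cdot \mathrm{tr}\, Df(\phi_t(\yy)) = (\nabla\cdot f)(\phi_t(\yy))\, \det J(t,\yy)$, using that $J$ is invertible (it equals $I$ at $t=0$ and stays invertible). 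Since $\det J(0,\yy) = 1$, integrating this scalar linear ODE in $t$ shows $\det J(t,\yy) = \exp\!\left( \int_0^t (\nabla\cdot f)(\phi_s(\yy))\, ds \right) > 0$, so the absolute value can be dropped.

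Finally I would differentiate under the integral sign — justified on the bounded set $B$ over any compact time interval by the continuity and local boundedness of $(\nabla\cdot f)\circ\phi$ and of $\det J$ — to get
\[
\frac{d}{dt}\nu(\phi_t(B)) = \int_B (\nabla\cdot f)(\phi_t(\yy))\, \det J(t,\yy)\, d\nu(\yy),
\]
and then change variables back via $\zz = \phi_t(\yy)$ to recognize the right-hand side as $\int_{\phi_t(B)} (\nabla\cdot f)\, d\nu$, which is the claimed identity. The volume-preservation conclusion is then immediate: if $\nabla\cdot f \equiv 0$, the derivative vanishes for every $B$ and all $t$, hence $\nu(\phi_t(B)) = \nu(B)$. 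I expect the main obstacle to be not the algebraic core (which is just Jacobi's formula) but the careful bookkeeping for the two analytic interchanges: obtaining the variational equation from smooth dependence on initial data, and moving the $t$-derivative inside the integral; both are standard but require the finiteness/compactness reductions made at the start.
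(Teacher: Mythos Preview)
Your proposal is correct and is precisely the standard proof of Liouville's formula via the variational equation and Jacobi's determinant identity. Note, however, that the paper does not supply its own proof of this statement: it is recorded in the Preliminaries as a cited background result (with reference to \cite{Sandholm10}) and used as a black box later when verifying $\nabla\cdot g=0$ for the transformed dynamics. So there is no ``paper's proof'' to compare against; your argument simply fills in the classical justification that the authors chose to outsource.
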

In 1890, Poincar\'e \cite{poincare} showed that whenever a dynamical system preserves volume, almost all trajectories return arbitrarily close to their initial position an infinite number of times.
\begin{theorem}[\textbf{Poincar\'e Recurrence} \cite{poincare,bar}]\label{thm:poinca} If a flow preserves volume and has only bounded orbits then for each open set there exist orbits that intersect the set infinitely often.
\end{theorem}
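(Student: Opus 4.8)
The plan is to reduce the continuous-time statement to the classical discrete-time recurrence lemma applied to a single time-$\tau$ map. Fix a nonempty open subset $U$ of the state space; since $U$ is open it has positive Lebesgue measure, $\nu(U)>0$. Fix any $\tau>0$ and set $T:=\phi_\tau$. Because the flow preserves volume, $\nu(T^{-1}(B))=\nu(B)$ for every measurable $B$, so $T$ is a measure-preserving transformation. The hypothesis that all orbits are bounded lets me confine the whole analysis to a bounded, flow-invariant region $X\supseteq U$ (for instance the closure of $\bigcup_{t\in\mathbb{R}}\phi_t(U)$, which $\phi_\tau$ maps onto itself, or simply the ambient bounded state space in the intended application $\Delta_n\times\Delta_n$), so that $\nu(X)<\infty$; this finiteness is exactly what powers the pigeonhole step below.

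First I would establish the one-shot return statement for $T$ on $X$. Let $N:=\{x\in U:\ T^n x\notin U\text{ for all }n\ge 1\}=U\setminus\bigcup_{n\ge 1}T^{-n}(U)$ be the set of points of $U$ that never return to $U$; it is Borel, hence measurable, because $T$ is continuous so each $T^{-n}(U)$ is open. The sets $N,T^{-1}(N),T^{-2}(N),\dots$ are pairwise disjoint: if $x\in T^{-m}(N)\cap T^{-n}(N)$ with $m<n$, then $T^m x\in N$ while $T^{\,n-m}(T^m x)\in N\subseteq U$ with $n-m\ge 1$, contradicting the definition of $N$. Each of these sets has measure $\nu(N)$ since $T$ is measure-preserving and $X$ is $T$-invariant, and they all lie inside the finite-measure set $X$; therefore $\nu(N)=0$. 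Hence almost every point of $U$ satisfies $T^n x\in U$ for some $n\ge 1$.

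To upgrade ``some $n$'' to ``infinitely many $n$,'' I would observe that a point $x\in U$ with only finitely many returns has a last return time $n_0\ge 0$, at which $T^{n_0}x\in N$ by definition of $N$; thus the set of such points is contained in $\bigcup_{n_0\ge 0}T^{-n_0}(N)$, a countable union of null sets, hence null. Consequently almost every $x\in U$ has $T^n x=\phi_{n\tau}(x)\in U$ for infinitely many $n\in\mathbb{N}$, and since $\nu(U)>0$ at least one such $x$ exists. Its orbit $\{\phi_t(x):t\in\mathbb{R}\}$ then intersects $U$ infinitely often, which is the claim.

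The main obstacle---really the only delicate point---is extracting a flow-invariant domain of finite Lebesgue measure from the ``bounded orbits'' hypothesis; once volume preservation is combined with finite total measure, the disjointness-and-pigeonhole argument is essentially forced. A secondary point worth checking is measurability of $N$ and of the finitely-many-returns set, which follows from continuity of $T$ (preimages of open sets are open) together with the fact that only countable set operations are used.
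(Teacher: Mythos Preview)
The paper does not prove this theorem; it is quoted as a classical result with citations and then used as a black box in the proof of Theorem~\ref{thm:main}. So there is no ``paper's own proof'' to compare against. What you have written is the standard discrete-time pigeonhole argument for Poincar\'e recurrence, and it is correct in outline.

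The one soft spot, which you already identify, is the construction of the bounded invariant carrier $X$. ``Bounded orbits'' means each orbit is bounded, not that the orbits are uniformly bounded, so the closure of $\bigcup_{t\in\mathbb{R}}\phi_t(U)$ need not be bounded in general. The usual fix is to stratify: for each $R>0$ set $X_R:=\{x:\phi_t(x)\in \overline{B_R}\text{ for all }t\}$, which is closed (by continuity of the flow it equals a countable intersection over rational $t$), flow-invariant, and of finite measure. Since $\bigcup_R X_R$ is the whole space by hypothesis, $\nu(U\cap X_R)>0$ for some $R$, and your disjointness argument applied inside $X_R$ with the positive-measure set $U\cap X_R$ yields a point whose $\phi_\tau$-orbit returns to $U\cap X_R\subseteq U$ infinitely often. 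For the paper's actual application this refinement is unnecessary: Lemma~\ref{lem:bounded} shows the transformed orbits stay in a single bounded region, so your second suggestion (take $X$ to be the ambient bounded region) already suffices there.
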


For more information on dynamical systems see \cite{perko} and for readers familiar with game theory see \cite{Hofbauer98, Sandholm10}.

\subsection{Replicator Dynamics and Evolution}\label{sec:replicator}
Replicator equations, first were introduced by Fisher \cite{Fisher30} in 30's for genotype evolution. The simplest form of replicator equations is the following:
\begin{equation*}
x_i'(t) = x_i(t)((A\xx(t))_i- \xx(t)^{\top}A\xx(t)).
\end{equation*}
where $A$ is a payoff matrix (generally non-negative), $\xx$ a vector that lies in simplex and $(A\xx)_i$ denotes $\sum_j A_{ij}x_j$. Observe that in the nonlinear dynamics above, simplex is invariant (if we start from a probability distribution, the vector remains a probability distribution). This dynamics is called replicator dynamics and has been used numerous times in biology, evolution, game theory and genetic algorithms. The dynamics we analyze in this paper is a version of replicator dynamics (on generalized RPS with dynamics payoff matrix).

\section{Our results}
In this section we state and prove our main result. Our main theorem can be stated formally as follows (see also Figures \ref{fig:x},\ref{fig:w}):
\begin{theorem}\label{thm:main} For all but measure zero of initial positions, the trajectories of the dynamics (\ref{eq:dynamics})  return arbitrarily close to their initial position an infinite number of times.
\end{theorem}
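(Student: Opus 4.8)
The plan is to bring the system into a form where the Poincar\'e recurrence theorem (Theorem~\ref{thm:poinca}) applies and then transport the conclusion back. Since the flow~(\ref{eq:dynamics}) is confined to $\Delta_n\times\Delta_n$, which has measure zero in $\mathbb{R}^{2n}$, I would first pass to a full-dimensional chart through the homeomorphism $\Pi$ of Definition~\ref{def:transform}; the natural choice is the log-ratio map sending the relative interior of $\Delta_n\times\Delta_n$ onto $\mathbb{R}^{2(n-1)}$ via $u_i=\ln(x_i/x_n)$ and $v_i=\ln(w_i/w_n)$ for $i=1,\dots,n-1$, whose inverse (the softmax) is clearly continuous. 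Differentiating and using the antisymmetry of $\PP^{\ww}$ and $\AA$, which kills the average-fitness corrections, the transformed dynamics reads $u_i'=s_i-s_n$ with $\sss=\PP^{\ww}\xx$, and $v_i'=x_n-x_i$.

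The first hypothesis, volume preservation, I would verify via Liouville's theorem (Theorem~\ref{thm:liouville}) by showing the divergence of the transformed field vanishes identically. The $v$-block is immediate: $v_i'=x_n-x_i$ does not involve the $v$-variables at all, so $\sum_i\partial v_i'/\partial v_i=0$. For the $u$-block, the key identity is $\partial x_j/\partial u_i=x_j(\delta_{ij}-x_i)$, which yields $\partial s_i/\partial u_i=-x_i s_i$ (the diagonal of $\PP^{\ww}$ is zero) and $\partial s_n/\partial u_i=\PP^{\ww}_{ni}x_i-x_i s_n$; summing $\partial u_i'/\partial u_i$ over $i=1,\dots,n-1$ and using $\sum_i x_i s_i=\xx^{\top}\PP^{\ww}\xx=0$ telescopes the sum to $0$. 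Hence $\nabla\cdot f\equiv 0$ in the new coordinates and the transformed flow is volume preserving.

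For the second hypothesis I would establish Lemma~\ref{lem:bounded} by exhibiting the log-barrier $E(\xx,\ww)=\sum_i\ln x_i+\mu\sum_i\ln w_i$ as a constant of motion: $E'=\mathbf{1}^{\top}\PP^{\ww}\xx+\mu\,\mathbf{1}^{\top}\AA\ww$, and a short computation using $\mathbf{1}^{\top}\PP=0$, the explicit structure of $\PP_i-\PP$ (a row of $\mu$'s and a column of $-\mu$'s), and $\sum_j x_j=\sum_j w_j=1$ gives $\mathbf{1}^{\top}\PP^{\ww}\xx=\mu(1-n\,\xx^{\top}\ww)$ and $\mathbf{1}^{\top}\AA\ww=n\,\xx^{\top}\ww-1$, so $E'=0$. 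Because $E$ is finite exactly on the interior and diverges to $-\infty$ as any coordinate tends to $0$, its invariance forces a trajectory started in the interior to stay within a compact subset of the interior, each coordinate being bounded below by a positive constant depending only on $E(\xx_0,\ww_0)$; equivalently, the transformed orbit stays in a bounded region of $\mathbb{R}^{2(n-1)}$. Poincar\'e recurrence now gives that almost every point of the transformed phase space is recurrent, and since $\Pi^{-1}$ is continuous this carries back to~(\ref{eq:dynamics}): all but a measure-zero set of initial positions in $\Delta_n\times\Delta_n$, the exceptional set being contained in the boundary together with the null set coming from Poincar\'e, return arbitrarily close to themselves infinitely often.

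The step I expect to be the main obstacle is finding the single coordinate change and the single conserved barrier $E$ that make \emph{both} Poincar\'e hypotheses hold at once: the cancellations in $E'$ and in $\nabla\cdot f$ are particular to the RPS structure, namely antisymmetry and zero diagonal of $\PP^{\ww}$ together with the precise coupling matrix $\AA$, and a slightly different feedback rule, or a slightly different chart, would in general destroy one of the two. A secondary technical point is the measure-theoretic bookkeeping: one must invoke Poincar\'e recurrence in its ``almost every orbit recurs'' form, valid here because all orbits are bounded, and check that $\Pi$ carries positive-measure sets to positive-measure sets, so that the exceptional set is genuinely negligible for the original dynamics.
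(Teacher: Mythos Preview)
Your proposal is correct and follows essentially the same route as the paper: pass to log-ratio coordinates via $\Pi$, verify divergence-freeness to invoke Liouville, use the conserved log-barrier $E(\xx,\ww)=\sum_i\ln x_i+\mu\sum_i\ln w_i$ (the negative of the paper's $D(\xx)+\mu D(\ww)$) to get bounded orbits, apply Poincar\'e recurrence, and pull back through the continuous $\Pi^{-1}$. The one noteworthy difference is in how you handle the divergence: the paper expands everything in the exponential coordinates $e^{y_j}$ and checks a telescoping cancellation by hand, whereas you work directly with the softmax Jacobian identity $\partial x_j/\partial u_i=x_j(\delta_{ij}-x_i)$ and reduce the whole $u$-block divergence to $-\xx^{\top}\PP^{\ww}\xx=0$ via antisymmetry; your argument is cleaner and makes transparent that only antisymmetry and zero diagonal of $\PP^{\ww}$ are needed, not the specific RPS entries.
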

\begin{definition}[\textbf{Natural transformation} \cite{Hofbauer98}]\label{def:transform} We define the natural transformation $\Pi : \textrm{int}(\Delta_n) \to \mathbb{R}^{n-1}$ to be $\Pi(\yy) = \left(\log \left (\frac{y_1}{y_n}\right),...,\log \left (\frac{y_{n-1}}{y_n}\right)\right)$. In words, we map every point $\yy$ of the interior of simplex to a point in $\mathbb{R}^{n-1}$. It is not hard to see that the map is injective and surjective. The reason is that $\Pi^{-1}$ exists and is equal to $\Pi^{-1}(\zz) = \left(\frac{e^{z_1}}{1+\sum_{j=1}^{n-1}e^{z_j}},\dots,\frac{e^{z_{n-1}}}{1+\sum_{j=1}^{n-1}e^{z_{j}}},\frac{1}{1+\sum_{j=1}^{n-1}e^{z_j}}\right)$.
The points on the boundary of simplex correspond to vectors with infinity Euclidean norm in $\mathbb{R}^{n-1}$.
\end{definition}
\begin{lemma}\label{lem:bounded} Let $(\xx^0,\ww^0)$ be an initial point in the interior of $\Delta_n \times \Delta_n$. The dynamics mapped to $\mathbb{R}^{n-1} \times \mathbb{R}^{n-1}$ (under natural mapping $(\Pi, \Pi)$) has bounded orbits $(\Pi(\xx^t),\Pi(\ww^t))$.
\end{lemma}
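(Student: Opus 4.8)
The plan is to exhibit a \emph{constant of motion} for the coupled system~(\ref{eq:dynamics}) that is finite on $\textrm{int}(\Delta_n\times\Delta_n)$ and diverges to $-\infty$ as a trajectory approaches the boundary; a conserved quantity of this type forces every orbit to remain inside a compact subset of the interior, and since the natural transformation $\Pi$ of Definition~\ref{def:transform} sends compact subsets of $\textrm{int}(\Delta_n)$ to bounded subsets of $\mathbb{R}^{n-1}$, boundedness of $(\Pi(\xx^t),\Pi(\ww^t))$ follows. Concretely (in the regime of interest $\mu>0$), I claim that
\[
V(\xx,\ww) \;=\; \sum_{i} \log x_i \;+\; \mu\sum_i \log w_i
\]
is invariant along solutions of~(\ref{eq:dynamics}). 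Recall also that both simplices are invariant under their respective replicator dynamics (as in Section~\ref{sec:replicator}), using antisymmetry of $\PP$ and of $\AA$, so $\xx^t,\ww^t$ stay in $\Delta_n$.

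To check $\frac{d}{dt}V=0$ I would differentiate term by term. From $x_i'=x_i(\PP^{\ww}\xx)_i$ we get $\sum_i \frac{x_i'}{x_i} = \mathbf{1}^{\top}\PP^{\ww}\xx = \sum_j c_j\, x_j$, where $c_j$ is the $j$-th column sum of $\PP^{\ww}$. The key computation is $c_j=\mu(1-n w_j)$: the base RPS matrix $\PP$ is antisymmetric with zero diagonal, hence has all column sums zero, while $\PP_k-\PP$ has its row $k$ equal to $\mu$ off-diagonal and its column $k$ equal to $-\mu$ off-diagonal, so its $j$-th column sum is $\mu$ for $j\ne k$ and $-(n-1)\mu$ for $j=k$; summing against $\ww$ and using $\sum_k w_k=1$ gives $c_j=\mu(1-n w_j)$. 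Therefore $\sum_i \frac{x_i'}{x_i} = \mu\big(1-n\sum_j x_j w_j\big)$. Similarly, from $w_i'=w_i\sum_j w_j(x_j-x_i)$ and $\sum_j w_j=1$ we get $\frac{w_i'}{w_i}=\sum_j w_j x_j - x_i$, so $\sum_i \frac{w_i'}{w_i} = n\sum_j w_j x_j - 1$. Adding the first expression to $\mu$ times the second, the two terms cancel exactly and $\frac{d}{dt}V=0$.

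Now fix an interior initial point and set $V_0=V(\xx^0,\ww^0)$, a finite real number. Since $\xx^t,\ww^t\in\Delta_n$, for every $i$ and $t$ we have $\log x_i^t\le 0$ and $\log w_i^t\le 0$, so each of the two sums in $V$ is nonpositive; from $V_0=\sum_i\log x_i^t + \mu\sum_i\log w_i^t$ this forces $\sum_i\log x_i^t\ge V_0$ and $\mu\sum_i\log w_i^t\ge V_0$. Because every summand is $\le 0$, each individual coordinate satisfies $\log x_i^t\ge \sum_j\log x_j^t\ge V_0$ and $\log w_i^t\ge \sum_j\log w_j^t\ge V_0/\mu$, i.e. $x_i^t\ge e^{V_0}>0$ and $w_i^t\ge e^{V_0/\mu}>0$ uniformly in $t$ (and, being probabilities summing to $1$, they are automatically bounded away from $1$). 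Hence the orbit stays in the compact set $\{(\xx,\ww):x_i\ge e^{V_0},\ w_i\ge e^{V_0/\mu}\}\subset\textrm{int}(\Delta_n\times\Delta_n)$, on which $\Pi$ (applied to each factor) is continuous and therefore bounded; this yields the claimed boundedness of $(\Pi(\xx^t),\Pi(\ww^t))$, which is exactly the ``no orbit reaches the boundary'' hypothesis needed for Poincar\'e recurrence (Theorem~\ref{thm:poinca}).

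The only real obstacle is getting the column-sum identity $c_j=\mu(1-nw_j)$ right, since everything else is a two-line differentiation; I would verify it directly against the explicit form of $\PP^{\ww}$ displayed in the model. (If $\mu=0$ the $\xx$-dynamics decouples and $\sum_i\log x_i$ alone is conserved, but then $V$ carries no information about $\ww$; the argument above is for $\mu>0$, the case relevant to the cycling phenomenon.)
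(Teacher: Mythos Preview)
Your proof is correct and is essentially the same as the paper's: your conserved quantity $V(\xx,\ww)=\sum_i\log x_i+\mu\sum_i\log w_i$ is precisely $-(D(\xx)+\mu D(\ww))$ for the paper's log-barrier $D(\yy)=\sum_i\log(1/y_i)$, and your derivative computation via column sums $c_j=\mu(1-nw_j)$ agrees with the paper's direct calculation of $\sum_i s_i$. Your argument is slightly more explicit in extracting the uniform lower bounds $x_i^t\ge e^{V_0}$, $w_i^t\ge e^{V_0/\mu}$ and in flagging the role of $\mu>0$, but the route is identical.
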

\begin{proof}
We define the log-barrier function
\[ D(\xx) = \sum_{i=1}^n \log \left( \frac{1}{x_i}\right). \]
Computing the derivative we get,
\begin{align*}
D'(\xx) &= - \sum_{i} \frac{x_i'}{x_i} = -  \sum_i s_i \\
 &= - \left( \sum_i x_i \mu  \left (\sum (w_j - w_i) \right) \right) \\
 &= -\mu \left(\sum_ i x_i (1 - n w_i)\right) \\
 &= -\mu \left( 1 - n \sum_i w_i x_i \right).
\end{align*}
We also define the log-barrier function
\[ D(\ww) = \sum_{i=1}^n \log \left( \frac{1}{w_i}\right), \]
and we have
\begin{align*}
D'(\ww) &= - \sum_{i} \frac{w_i'}{w_i} = -  \sum_i \sum_{j} w_j (x_j - x_i) \\
&=  -\sum_j \sum_{i} w_j (x_j - x_i)\\
&=   -\sum_j w_j (n x_j - 1)\\
&= -\left( n \sum_{j} w_j x_j -1 \right). \\
\end{align*}

Therefore,
\begin{align*}
D'(\xx)  + \mu D'(\ww) &= -\mu \left( 1 - n \sum_i w_i x_i \right) -\mu \left( n \sum_{i} w_i x_i -1 \right)\\
&= \left( n\sum_i w_i x_i - 1  \right) (\mu-\mu) = 0.
\end{align*}

Hence,
\[D(\xx)  + \mu D(\ww) \]
is a constant motion of time (independent of time). It is also clear that $D(\xx) , D(\ww) \geq 0$ and become infinity only on the boundary of $\Delta_n \times \Delta_n$. Since $D(\xx^0)+ \mu D(\ww^0)$ is bounded (i.e., $(\xx^0,\ww^0)$ is an interior point), we get that $D(\xx^t) +\mu D(\ww^t)$ is bounded for all times $t$, hence there is no subsequence of times $t_k$ so that $(\xx^{t_k},\ww^{t_k})$ converges to the boundary of $\Delta_n \times \Delta_n$. Therefore there is no subsequence of times $t_k$ so that $\norm[2]{(\Pi(\xx^{t_k}),\Pi(\ww^{t_k}))}$ goes to infinity, i.e., the dynamics on $\mathbb{R}^{n-1} \times \mathbb{R}^{n-1}$ (under natural mapping $(\Pi,\Pi)$) has bounded orbits.
\end{proof}
\begin{proof}[Proof of Theorem \ref{thm:main}] Under the mapping $\Pi$ for $\xx,\ww$, set $\yy \equiv \Pi(\xx)$ and $\zz \equiv \Pi (\ww)$. We shall show that the dynamics $(\yy' ,\zz') = g(\yy,\zz)$ satisfies the conditions of the Poincar\'e recurrence theorem (Theorem \ref{thm:poinca}), where $g$ is the vector field of the resulting dynamics, after the transformation $(\Pi,\Pi)$.

The vector field becomes $y_i' =  \frac{x_n}{x_i} \frac{x_i'x_n - x_ix_n'}{x_n^2} = \frac{x_i'}{x_i} - \frac{x_n'}{x_n} = \sum_j(P^{\ww}_{ij}- P^{\ww}_{nj})x_j = x_1+x_{i-1}-x_{n-1}-x_{i+1}+ \mu (w_i-w_n)$ (with the convention that $x_0 = x_n$). Similarly $z_i' =  \frac{w_n}{w_i} \frac{w_i'w_n - w_iw_n'}{w_n^2} = \frac{w_i'}{w_i} - \frac{w_n'}{w_n} = \sum_jw_j (x_n - x_i) = x_n - x_i$.

\smallskip
After substitution (using $\Pi^{-1}$) we get
\begin{equation}\label{eq:newdynamics}
y_i' = \frac{e^{y_1}+e^{y_{i-1}}-e^{y_{n-1}}-e^{y_{i+1}}}{1+\sum_{j=1}^{n-1}e^{y_j}}+\mu\frac{e^{z_i}-1}{1+\sum_{j=1}^{n-1}e^{z_j}},\; z_i' = \frac{1-e^{y_i}}{1+\sum_{j=1}^{n-1}e^{y_j}}.
\end{equation}
We shall prove that the flow of the dynamics (\ref{eq:newdynamics}) preserves the volume, by showing that $\nabla \cdot g =0 $ (then it follows from Liouville's theorem \ref{thm:liouville}). 

Set $S = \sum_{j=1}^{n-1}e^{y_j}$. We compute the partial derivatives and we get that 

\begin{equation}
\begin{array}{ll}
\frac{\partial g}{\partial y_1} &= \frac{e^{y_1}(1+S) - e^{y_1}(1+e^{y_1}-e^{y_2}-e^{y_{n-1}})}{(1+S)^2},\\
\frac{\partial g}{\partial y_j} &= -\frac{e^{y_j}(  e^{y_1}+e^{y_{j-1}}-e^{y_{j+1}}-e^{y_{n-1}})}{(1+S)^2}, \textrm{ for }2 \leq j\leq n-2,\\
\frac{\partial g}{\partial y_{n-1}} &= \frac{-e^{y_{n-1}}(1+S) - e^{y_{n-1}}(e^{y_1}+e^{y_{n-2}}-1-e^{y_{n-1}})}{(1+S)^2},\\
\frac{\partial g}{\partial z_j} &= 0 \textrm{ for }1 \leq j\leq n-1.
\end{array}
\end {equation}
Therefore we get that 
\begin{align*}
\nabla \cdot g &= \sum_{j=1}^{n-1} \frac{\partial g}{\partial y_j} + \sum_{j=1}^{n-1}\frac{\partial g}{\partial z_j} \\&= \frac{-\sum_{j=1}^{n-1} e^{y_j}(e^{y_1}+e^{y_{j-1}}-e^{y_{j+1}}-e^{y_{n-1}})+(e^{y_1}-e^{y_{n-1}})(1+\sum_{j=1}^{n-1}e^{y_j})}{(1+S)^2},
\end{align*}
with the convention that $y_0=y_n=0$. 
We get that all the terms of the form $e^{y_j}e^{y_{j+1}}$ cancel out (telescopically) and also terms $e^{y_1}e^{y_j}, e^{y_{n-1}}e^{y_j}$ also cancel out for all $j$, i.e., it turns out that $\nabla \cdot g=0$. Hence we conclude from Liouville theorem that the dynamical system (\ref{eq:newdynamics}) preserves volume.

Using the fact that the flow preserves the volume and that the orbits are bounded (Lemma \ref{lem:bounded}), we apply Poincar\'e recurrence theorem on $(\tilde{\yy},\tilde{\zz}) = g(\yy,\zz)$ for a small open ball around any initial point $(\tilde{\yy}_0,\tilde{\zz}_0)$ and the claim follows for the transformed dynamics. Since $\Pi^{-1}$ is continuous, if the distance between two points goes to zero in $\mathbb{R}^{n-1}\times \mathbb{R}^{n-1}$, so it does in $\Delta_{n}\times \Delta_n$ and the claim follows for the dynamics (\ref{eq:dynamics}).
\end{proof}

\begin{figure}[htb!]
		\centering     
			\includegraphics[width=0.60\linewidth]{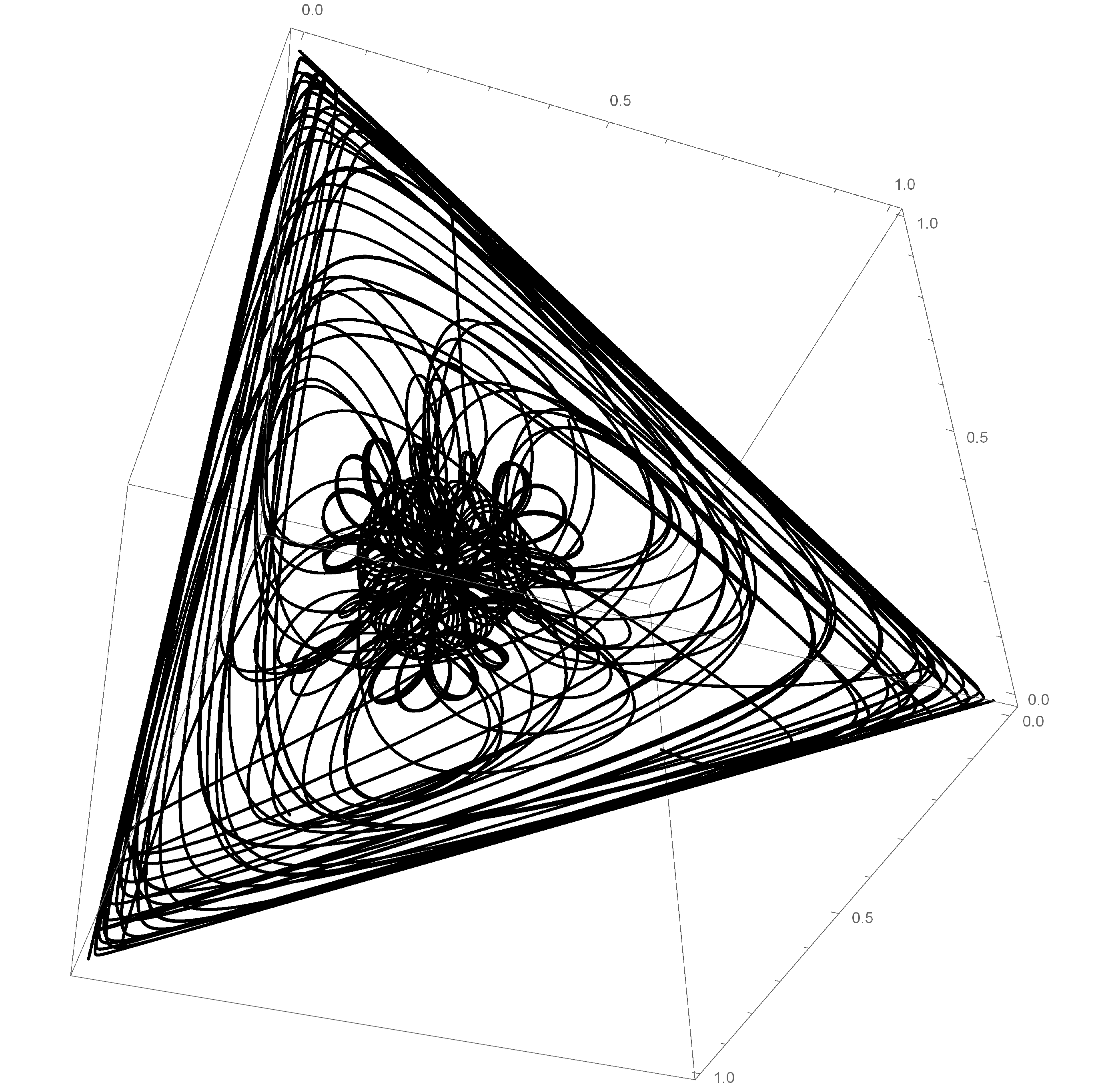}
			\caption{Trajectories of the vector $\xx$ for different initial positions with $\mu = 0.1$. Trajectories intersect due to the fact 6 dimensions are projected to a 3D figure. The ``cycling" behavior is observed.}
			{\label{fig:x}}
		\end{figure}
		\begin{figure}[htb!]
			\centering
			\includegraphics[width=0.60\linewidth]{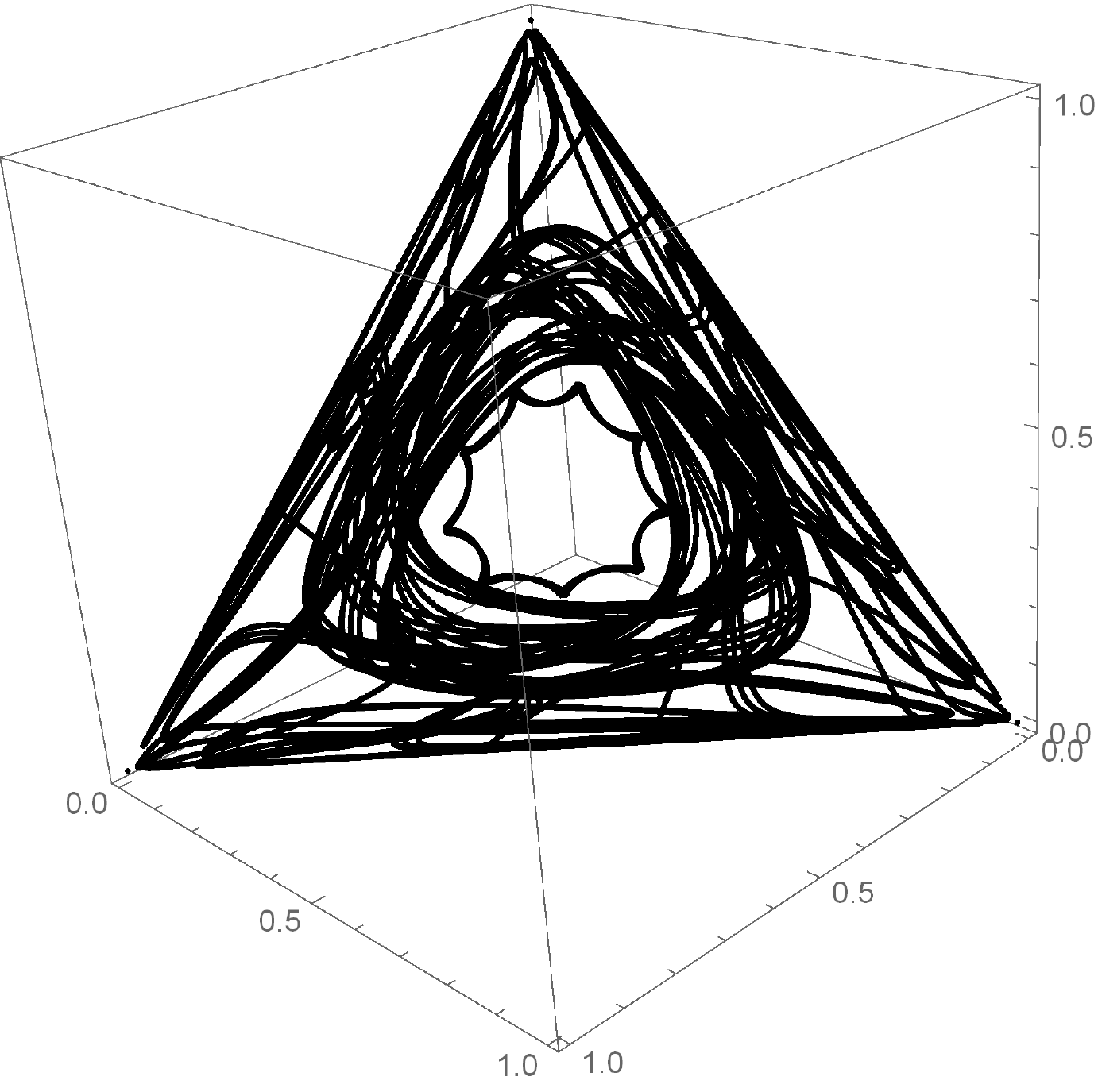}
			\caption{Trajectories of the vector $\ww$ for different initial positions with $\mu = 0.1$. Trajectories intersect due to the fact 6 dimensions are projected to a 3D figure. The ``cycling" behavior is observed.}
			{\label{fig:w}}
	
\end{figure}

\section{Discussion}
Let us first address some immediate generalizations of our model.  
Our proofs can be easily extended to the case where in the antisymmetric matrix $\PP$, each 1 and -1 pair is replaced by $a$ and $-a$, for a different parameter $a$.
We have presented the simpler case in this abstract in order to enhance clarity of the main new ideas. The full paper will contain the more general case.

There are several generalizations of our model which are worth studying. For example, what happens if each matrix $\PP_i$ has its own parameter $\mu_i$? So far we have not been able to prove cycling for this general case but simulations indicate that the system does cycle. Understanding the range of parameters which lead to cycling and those that do not will be interesting. 

We are not aware of other uses of the Poincar\'e theorem in differential games. It will be nice to see other applications of this powerful theorem  
to learning dynamics, other than replicator dynamics, where the state space (payoff matrix) changes as per a differential equation. 

More broadly, we believe that the work of \cite{weitz} and our work are opening up the possibility of modeling more complex biological phenomena using the setting of differential games and obtaining insights into new biological phenomena. 

\section*{Acknowledgements}
 
Ioannis Panageas would like to acknowledge a MIT-SUTD postdoctoral fellowship and thank Georgios Piliouras for pointing out \cite{sodapiliouras} and fruitful discussions.

\bibliographystyle{plain}
\bibliography{bibliography}

\end{document}